\theoremstyle{definition}
\newtheorem{defin}{Definition}[section]
\theoremstyle{plain}
\newtheorem{thm}[defin]{Theorem}
\newtheorem{cor}[defin]{Corollary}
\theoremstyle{remark}
\newcounter{num}
\title{Equitable block colourings}
\author{Paola Bonacini}
\email{bonacini@dmi.unict.it}
\author{Lucia Marino}
\email{lmarino@dmi.unict.it}
\address{Università degli Studi di Catania\\
  Viale A. Doria 6\\
95125 Catania\\
Italy}
\begin{document}

\begin{abstract}

Let $\Sigma=(X,\mathcal B)$ a $4$-cycle system of order $v=1+8k$. A $c$-colouring of type $s$ is a map $\phi\colon \mathcal B\rightarrow \mathcal C$,
with $C$ set of colours,  such that  exactly $c$ colours are used and for every vertex $x$ all the blocks containing $x$ are coloured exactly with $s$ colours. Let $4k=qs+r$, with $q,r\ge 0$. $\phi$ is \emph{equitable} if for every vertex $x$  the set of the $4k$ blocks containing $x$ is parted in $r$ colour classes of cardinality $q+1$ and $s-r$ colour classes of cardinality $q$. In this paper we study colourings for which $s|k$, giving a description of equitable block colourings for $c\in  \{s,s+1,\dots,\lfloor\tfrac{2s^2+s}{3}\rfloor \}$.
\end{abstract}

\maketitle

\section{Introduction}

Block colourings of $4$-cycle systems have been introduced and studied in \cite{GGR,GR}. For any vertex $x$ these colourings require particular conditions on the colours of the blocks containing $x$.

Let $K_v$ be the complete simple graph on $v$ vertices. The graph having vertices $a_1,a_2,\dots,a_k$, with $k\ge 3$, and having edges $\{a_k,a_1\}$ and  $\{a_i,a_{i+1}\}$ for $i=1,\dots,k-1$ is a $k$-cycle and it will be denoted by $(a_1,a_2,\dots,a_k)$. A $4$-cycle system of order $v$, briefly $4CS(v)$, is a pair $\Sigma=(X,\mathcal B)$, where $X$ is the set of vertices and $\mathcal B$ is a set of $4$-cycles, called \emph{blocks}, that partitions the edges of $K_v$. It is known that a $4CS(v)$ exists if and only if $v=1+8k$, for some $k\ge 0$.

A colouring of a $4CS(v)$ $\Sigma=(X,\mathcal B)$ is a mapping $\phi\colon \mathcal B\rightarrow \mathcal C$, where $\mathcal C$ is a set of colours. A $c$-colouring is a colouring in which exactly $c$ colours are used. The set of blocks coloured with a colour of $\mathcal C$ is a \emph{colour class}. A $c$-colouring of type $s$ is a colourings in which, for every vertex $x$, all the blocks containing $x$ are coloured exactly with $s$ colours. 

Let $\Sigma=(X,\mathcal B)$ a $4CS(v)$, with $v=1+8k$, let $\phi\colon \mathcal B\rightarrow \mathcal C$ be a $c$-colouring of type $s$ and let $4k=qs+r$, with $q,r\ge 0$. Note that each vertex of a $4CS(v)$, with $v=1+8k$, is contained in exactly $\tfrac{v-1}{2}=4k$ blocks. $\phi$ is \emph{equitable} if for every vertex $x$  the set of the $4k$ blocks containing $x$ is parted in $r$ colour classes of cardinality $q+1$ and $s-r$ colour classes of cardinality $q$. A bicolouring, tricolouring or quadricolouring is an equitable colouring with $s=2$, $s=3$ or $s=4$.

The colour spectrum of a $4CS(v)$ $\Sigma=(X,\mathcal B)$ is the set:
\[
\Omega_s(\Sigma)=\{c\mid \text{ there exists a an $c$-block-colouring of type $s$ of $\Sigma$}\}.
\] 
It is also considered the set $\Omega_s(v)=\bigcup \Omega_s(\Sigma)$, where $\Sigma$ varies in the set of all the $4CS(v)$. 

Let us recall that the \emph{lower $s$-chromatic index} is $\chi'_s(\Sigma)=\min \Omega_s(\Sigma)$ and the \emph{upper $s$-chromatic index} is $\overline{\chi}'_s(\Sigma)=\max \Omega_s(\Sigma)$. If $\Omega_s(\Sigma)=\emptyset$, then we say that $\Sigma$ is uncolourable. 

In the same way we consider $\chi'_s(v)=\min \Omega_s(v)$ and $\overline{\chi}'_s(v)=\max \Omega_s(v)$.

Block colourings for $s=2$, $s=3$ and $s=4$ have been studied in \cite{BM,GGR,GR}. The problem arose as a consequence of colourings of Steiner systems studied in \cite{CR,GHMR,GQ,V}.

In this paper we study colourings for which $s | k$, giving a description of equitable block colourings for $c\in  \{s,s+1,\dots,\lfloor\tfrac{2s^2+s}{3}\rfloor\}$.

\section{Main result}

In this section we prove the main result of the paper, giving in each case the construction of the desired colouring. In these constructions we will use the following symbolism. Let $A=\{a_1,a_2,\dots,a_{2p}\}$ and $B=\{b_1,b_2,\dots,b_{2q}\}$  be two sets such that $A\cap B=\emptyset$. We denote by $[A,B]$ the following family of $4$-cycles:
\[
[A,B]=\{(a_i,b_j,a_{i+p},b_{j+p})\mid 1\le i\le p,\, 1\le j \le q\}.
\]
Note that $|[A,B]|=pq$.

\begin{thm}
If $s\mid k$, then  $s,s+1,\dots,\lfloor\tfrac{2s^2+s}{3}\rfloor\in \Omega_s(v)$.
\end{thm}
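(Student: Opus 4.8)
The plan is to construct, for each value of $c$ in the range $\{s, s+1, \dots, \lfloor\tfrac{2s^2+s}{3}\rfloor\}$, an explicit $4CS(v)$ together with an equitable $c$-colouring of type $s$. Since $s\mid k$, write $k = st$ for some $t\ge 1$, so $v = 1+8st$ and each vertex lies in $4k = 4st$ blocks; with $4k = qs+r$ we get $q = 4t$ and $r = 0$, so equitability simply demands that at every vertex the $4st$ blocks split into $s$ colour classes of size $4t$ each. The key building block is a standard decomposition of $K_v$ (with $v = 1+8st$) into $4$-cycles that is "group-divisible" in flavour: partition the $8st$ non-fixed vertices into $2s$ groups of size $4t$ (or similar), and use the $[A,B]$ families introduced in the excerpt, which contribute $pq$ four-cycles from a pair of even-sized parts $A$, $B$ and — crucially — distribute the edges at each vertex evenly across the parts. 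I would first fix such a base $4CS(v)$ so that the blocks through any vertex are naturally indexed by pairs of "coordinates," making it transparent how to recolour them.

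Next I would handle the colouring itself. The lower end $c = s$ is forced to be achievable for the equitable type-$s$ notion to be non-vacuous: colour so that each vertex sees exactly $s$ colours, each on $4t$ of its blocks — this is essentially a resolvability/parallelism statement about the base system and should follow by assigning colours according to one coordinate of the $[A,B]$ indexing. To climb from $c = s$ up to larger $c$, the idea is to take an equitable type-$s$ colouring and "split" one existing colour class locally: refine the global colouring on a controlled sub-configuration so that one more colour appears overall, while at each vertex we either leave its palette of $s$ colours untouched or swap one colour for a new one without changing any class size (still $4t$). Iterating such local splits raises $c$ by one each time. The arithmetic bound $\lfloor\tfrac{2s^2+s}{3}\rfloor$ is exactly the point at which this splitting process can no longer be carried out while keeping every vertex at exactly $s$ colours of equal size — presumably because beyond it a counting inequality (relating the number of colours, the $s$ colours-per-vertex, and the pairwise "co-occurrence" of colours at vertices, a Fisher-type or double-counting bound) fails. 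So the construction should be engineered to saturate that inequality.

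Concretely, I expect the argument to proceed in cases according to the residue of $c$ or the shape of $c$ relative to $s$: perhaps $c = s + j$ for $0 \le j \le \binom{s}{2}$-ish, with a uniform construction in a first regime and a more delicate one near the top of the range. In each case: (i) specify the partition of the vertex set and the $[A,B]$-type blocks, checking $|\mathcal B| = \frac{v(v-1)}{8}$ and that every edge is covered exactly once; (ii) define $\phi$ on these blocks; (iii) verify for a generic vertex $x$ that exactly $s$ colours occur among its $4st$ blocks and each occurs $4t$ times; (iv) verify globally that exactly $c$ colours are used. The main obstacle will be step (iii)–(iv) near $c = \lfloor\tfrac{2s^2+s}{3}\rfloor$: making the colour-to-vertex incidence simultaneously equitable at every vertex \emph{and} hit the exact count $c$ at the extreme of the feasible range requires a carefully balanced design — essentially an auxiliary combinatorial structure (a suitable edge-colouring of $K_s$ or a near-one-factorisation) whose existence must be argued, and whose parameters are precisely what force the $\tfrac{2s^2+s}{3}$ threshold. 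I would isolate that auxiliary design as a lemma and prove the theorem modulo it, then construct it by hand, splitting into the three residue classes of $s$ modulo $3$ that the floor function suggests.
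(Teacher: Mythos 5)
Your plan correctly sets up the arithmetic ($r=0$, so each vertex must see $s$ colours each on $4h$ blocks, where $k=hs$), and your idea of swapping a whole colour class at a vertex for a brand-new colour does capture how the paper climbs from $c=s+1$ up to $c=\tfrac{s^2+s}{2}$: with $s$ parts $A_1,\dots,A_s$ of size $8h$ plus $\infty$, each cross-family $[A_p,A_q]$ is an entire colour class of size $4h$ at every vertex of $A_p\cup A_q$, so recolouring whole families one at a time adds colours without disturbing equitability. But that mechanism caps out at $s+\binom{s}{2}=\tfrac{s^2+s}{2}$ colours, since there are only $\binom{s}{2}$ cross-families available, and your proposal contains no idea for getting past this point --- which is the real content of the theorem, because $\lfloor\tfrac{2s^2+s}{3}\rfloor>\tfrac{s^2+s}{2}$ for all $s\ge 3$.

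The paper's device for the range $\tfrac{s^2+s}{2}+1\le c\le\lfloor\tfrac{2s^2+s}{3}\rfloor$ is to halve the parts: take $2s$ parts of size $4h$, paired through $\infty$ into $s$ subsystems, so that each cross-family now contributes only $2h$ blocks (half a colour class) at each of its vertices and families must be grouped two-per-vertex into colour classes. This is precisely a decomposition of $K_{2s}$ minus the pairing $1$-factor into cycles: a triangle yields a colour class of three families, a $4$-cycle one of four, so trading $4$-cycles for triangles raises $c$, and the all-triangle extreme gives $s+\tfrac{2s^2-2s}{3}=\tfrac{2s^2+s}{3}$. The existence of the required mixed $\{3,4\}$-cycle decompositions is not the near-one-factorisation of $K_s$ you gesture at, nor something built by hand via residues of $s$ modulo $3$; it is quoted from the Heinrich--Horak--Rosa theorem on Alspach's conjecture. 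Your heuristic that the threshold is ``where a counting inequality fails'' is also unsupported: the paper's own necessary bound is $\overline{\chi}'_s(v)\le\tfrac{s^2v}{v+s-1}$, which is strictly larger, so $\lfloor\tfrac{2s^2+s}{3}\rfloor$ is only the limit of this construction. Finally, the proposal never writes down a base system or verifies equitability at a vertex, so even the cases it does address remain plans rather than proofs.
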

\begin{proof}
Let $k=hs$, with $h\in \mathbb N$. Let $c\in\{s,s+1,\dots,\lfloor\binom{2s^2+s}{3}\rfloor\}$. 

\textbf{Case $\mathbf{c=s}$}. $\Sigma=(\mathbb Z_v,\mathcal B)$ with 
starter blocks $\{(0,i,4k+1,k+i)\mid 1\le i\le k\}$ is a $4$CS$(v)$. If we 
assign the colour $j$ to the blocks obtined for $i=(j-1)h+1,\dots,jh$ and all 
their translated forms, we get an $s$-block-colouring of type $s$ of $\Sigma$.  

\textbf{Case $\mathbf{c=s+1}$}. Consider the sets $A_i=\{a_{1}^{(i)},\dots,a_{8h}^{(i)}\}$ for $i=1,\dots,s$ 
with $A_i\cap A_j=\emptyset$ for $i\ne j$. Take $\infty \notin A_1\cup \dots 
\cup A_s$ and consider the following $4$-cycle systems of order $1+8h$:
\[
\Sigma_i=(A_i\cup \{\infty\},\mathcal B_{i})
\]
for $i=1,\dots,s$. We define the following $4$-cycle system $\Sigma=(X,\mathcal 
B)$ of order $v=1+s\cdot 8h=1+8k$, with:
\[
X=\bigcup_{i=1}^s A_i\cup \{\infty\}
\]
and
\[
	\mathcal B=\bigcup_{i=1}^s \mathcal B_i\cup \bigcup_{p<q}[A_p,A_q].
\]

We define a block-colouring $f\colon \mathcal B\rightarrow \{1,\dots,s+1\}$ as 
follows:
\begin{align*}
f(\medsquare)&=i\quad \forall \medsquare \in \mathcal B_i\\
f([A_p,A_q])&=i\quad  \forall p,q \mbox{ such that } p+q\equiv i \mod s+1.
\end{align*}
In this way we get a $s+1$-block-colouring of type $s$ of $\Sigma$.

\textbf{Case $\mathbf{s+2\le c\le \frac{s^2+s}{2}}$}. Take 
$p_1,\dots,p_{c-s-1},q_1,\dots,q_{c-s-1}\in \{1,\dots,s\}$ such that 
$(p_l,q_l)\ne (p_m,q_m)$ for $l\ne m$. We define a block-colouring 
\[
g\colon 
\mathcal B\rightarrow \{1,\dots,c\}
\]
as follows:
\begin{align*}
g(\medsquare)&=f(\medsquare)\quad \forall \medsquare \in \mathcal 
B\setminus\{[A_{p_1},A_{q_1}],\dots,[A_{p_{c-s-1}},A_{q_{c-s-1}}]\}\\
g([A_{p_i},A_{q_i}])&=s+1+i\quad  \forall i=1,\dots,c-s-1.
\end{align*}
In this way we get a $c$-block-colouring of type $s$ of $\Sigma$. 

\textbf{Case $\mathbf{\tfrac{s^2+s}{2}+1\le c \le \lfloor\tfrac{2s^2+s}{3}\rfloor}$}. 
Consider the sets $A_1,\dots,A_{2s}$ defined in the following way: 
\begin{itemize}
	\item $A_i=\{a_{1}^{(i)},\dots,a_{4h}^{(i)}\}$ for $i=1,3,\dots,2s-1$ 
	\item $A_{i+1}=\{a_{4h+1}^{(i)},\dots,a_{8h}^{(i)}\}$ for 
$i=1,3,\dots,2s-1$ 
	\item $A_i\cap A_j=\emptyset$ for $i\ne j$.
\end{itemize}
Take $\infty \notin A_1\cup \dots \cup A_{2s}$ and consider the following 
$4$-cycle systems of order $1+8h$:
\[
\Sigma_i=(A_i\cup  A_{i+1}\cup\{\infty\},\mathcal B_{i})
\]
for $i=1,3,\dots,2s-1$. Let us consider also the set:
\[
F= \{(p,q)\mid p,q=1,\dots,2s,\, p<q\}\setminus \{(1,2),(3,4),\dots,(2s-1,2s)\}.
\]
We define the following $4$-cycle system 
$\Sigma=(X,\mathcal B)$ of order $v=1+2s\cdot 4h=1+8k$, with:
\[
X=\bigcup_{j=1}^{2s} A_j\cup \{\infty\}
\]
and
\[
	\mathcal B=\bigcup_{j=0}^{s-1} \mathcal B_{2j+1}\cup \bigcup_{(p,q)\in F}[A_p,A_q].
\]
Let $c=\tfrac{s^2+s}{2}+t$, with $1\le t\le \tfrac{s^2-s}{6}$. Then: 
\begin{equation} \label{eq:1}
2s^2-2s=4t\cdot 3+\left(\tfrac{s^2-s}{2}-3t\right)\cdot 4.  
\end{equation}
Let us consider the complete graph $K_{2s}$ and the $1$-factor 
\[I= \{(1,2),(3,4),\dots,(2s-1,2s)\}.\]
By \eqref{eq:1} and by \cite[Theorem 2.4]{HHR} it is possible to divide $K_{2s}-I$ in $3$-cycles 
$C_1,\dots,C_{4t}$ and in $4$-cycles $C_{4t+1},\dots,C_{\tfrac{s^2-s}{2}+t}$. 

If $C_m=(i,j,k)$ is one of the $3$-cycles, 
let us consider 
\[\mathcal  C_m=\{[A_i,A_j],[A_j,A_k],[A_i,A_k]\};\] 
if $C_m=(i,j,k,l)$ is one of the $4$-cycles, let us consider 
\[\mathcal  C_m=\{[A_i,A_j],[A_j,A_k],[A_k,A_l],[A_l,A_i]\}.\] 
We define a 
block-colouring $f\colon 
\mathcal B\rightarrow \{1,\dots,c\}$ as follows:
\begin{align*}
f(\medsquare)&=\frac{i+1}{2}\quad \forall \medsquare \in \mathcal 
B_i\\
f([A_{p},A_{q}])&=s+i\quad  \forall [A_p,A_q]\in \mathcal C_i.
\end{align*}
In this way we get a $c$-block-colouring of type $s$ of $\Sigma$. 
\end{proof}

\begin{cor}
If $s|k$, then $\chi'_s(v)=s$.
\end{cor}

\begin{thm}
Let us suppose that $s\mid k$. Then $\overline{\chi}'_s(v)\le 
\tfrac{s^2v}{v+s-1}.$
\end{thm}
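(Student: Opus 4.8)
The plan is to prove this by double counting, the key point being that when $s\mid k$ the equitable condition forces every colour class at every vertex to have the \emph{same} cardinality, which makes each colour subgraph regular. Write $k=hs$ with $h\in\mathbb N$, so that $v=1+8hs$ and $4k=(4h)s+0$; hence in an equitable colouring of type $s$ the parameters are $q=4h$ and $r=0$, i.e.\ for every vertex $x$ and every one of the $s$ colours appearing at $x$, exactly $4h$ of the blocks through $x$ carry that colour. (Equitability is genuinely needed here: for a merely type-$s$ colouring one colour could absorb almost all blocks while the rest are singletons, and $c$ can then be of order $v$; so I take the statement to concern equitable colourings, consistently with the rest of the paper.)

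First I would fix an equitable $c$-colouring $\phi$ of type $s$ of a $4CS(v)$ $\Sigma=(X,\mathcal B)$, and for each colour $i$ introduce $V_i\subseteq X$, the set of vertices lying on at least one block of colour $i$, with $m_i=|V_i|$, and $G_i$, the simple graph with vertex set $V_i$ and edge set the edges of the blocks of colour $i$. The first step is a local degree count: a vertex $x\in V_i$ has degree $2$ in each $4$-cycle of colour $i$ through it, these cycles are pairwise edge-disjoint since $\mathcal B$ partitions $E(K_v)$, and by the previous paragraph $x$ lies on exactly $4h$ of them, so $\deg_{G_i}(x)=8h$. Thus $G_i$ is an $8h$-regular simple graph, which forces $m_i\ge 8h+1$ for every colour $i$.

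The second step is the global count $\sum_{i=1}^{c}m_i=vs$: summing over colours the number of vertices that see each colour equals summing over vertices the number of colours seen at that vertex, and the latter is $s$ at every vertex by the definition of type $s$. Combining with $m_i\ge 8h+1$ gives $c(8h+1)\le vs$. It only remains to observe the arithmetic identity $v+s-1=8hs+s=s(8h+1)$, whence $\tfrac{vs}{8h+1}=\tfrac{s^2v}{v+s-1}$; therefore $c\le\tfrac{s^2v}{v+s-1}$, and taking the maximum over all admissible $c$ and all $4CS(v)$ yields $\overline{\chi}'_s(v)\le\tfrac{s^2v}{v+s-1}$.

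I do not expect a serious obstacle: the only real idea is recognising that $s\mid k$ upgrades ``equitable'' to ``perfectly balanced'', so that the elementary fact ``an $8h$-regular simple graph has at least $8h+1$ vertices'' can be applied uniformly to every colour class. The point to be careful about is the bookkeeping of constants — that $r=0$, $q=4h$, and that each block through $x$ contributes $2$ (not $1$) to the degree of $x$ in $G_i$ — so I would sanity-check the computation against $s=2$, $h=1$, $v=17$, where the bound reads $c\le 68/18$, i.e.\ $c\le 3$, matching $\lfloor\tfrac{2s^2+s}{3}\rfloor=3$ from the first theorem.
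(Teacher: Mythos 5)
Your proof is correct and follows essentially the same route as the paper's: both use $s\mid k$ to get $q=4h$, $r=0$, deduce that every colour class meets at least $8h+1$ vertices, and double-count vertex--colour incidences to obtain $c(8h+1)\le sv$. Your write-up is more careful than the paper's (in particular in making explicit that equitability is what forces the $4h$ blocks per colour at each vertex, and in the final algebraic identification $v+s-1=s(8h+1)$), but there is no substantive difference in the argument.
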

\begin{proof}
Let $\Sigma=(V,\mathcal B)$ be a 4CS$(1+8k)$ and let $\varphi\colon \mathcal 
B\rightarrow \mathcal C$ be a $c$-colouring of type $s$ of $\Sigma$ and let $k=hs$. Let 
$x\in \mathcal C$ and take element $v\in V$ incident with blocks of colour $x$. 
$v$ is contained in $4h$ blocks colour $x$ and so in $V$ there are at least 
$1+8h$ elements incident with blocks of colour $x$. Then $c(1+8h)\le sv$. 
\end{proof}


\begin{thebibliography}{99}

\bibitem{BM}
P. Bonacini, L. Marino, \emph{Equitable tricolourings for $4$-cycle systems}, preprint.

\bibitem{CR}
C.J. Colbourn, A. Rosa, \emph{Specialized block-colourings of Steiner triple systems and the upper chromatic index}, Graphs Combin. \textbf{19} (2003) 335--345.

\bibitem{GGR}
L. Gionfriddo, M. Gionfriddo, G. Ragusa, \emph{Equitable specialized block-colourings for $4$-cycle systems - I}, Discrete Math. \textbf{310} (2010), 3126--3131.

\bibitem{GR}
M. Gionfriddo, G. Ragusa, \emph{Equitable specialized block-colourings for 4-cycle systems - II}, Discrete Math. \textbf{310} (2010), 1986--1994.

\bibitem{GHMR}
M. Gionfriddo, P. Horak, L. Milazzo, A. Rosa, \emph{Equitable specialized block-colourings for Steiner triple systems}, Graphs Combin. \textbf{24} (2008), 313--326.

\bibitem{GQ}
M. Gionfriddo, G. Quattrocchi, \emph{Colouring 4-cycle systems with equitable coloured blocks}, Discrete Math. \textbf{284} (2004) 137--148.

\bibitem{HHR}
K. Heinrich, P. Horak, A. Rosa, \emph{On Alspach’s conjecture}, Discrete Math. \textbf{77} (1989), 97--121.

\bibitem{V}
V. Voloshin, \emph{Coloring block designs as mixed hypergraphs: survey}, Abstracts of papers presented to the American Mathematical Society (2005) Volume 26, Number 1, Issues 139, p.15.



\end{thebibliography}
\end{document}